\theoremstyle{plain}
\newtheorem{theorem}{Theorem}[section]
\newtheorem{lemma}[theorem]{Lemma}
\newtheorem*{proposition*}{Proposition}
\newtheorem*{lemma*}{Lemma}
\newtheorem*{corollary*}{Corollary}
\theoremstyle{definition}
\newtheorem*{remarks*}{Remarks}
\newtheorem*{remark*}{Remark}
\numberwithin{equation}{section}
\newcommand{\Mim}{\mathfrak{Im}\,}
\newcommand {\pnorm}[1]   {\left\lVert #1 \right\rVert}
\newcommand{\SB}{\backslash}
\newcommand{\FmH}{\mathfrak{H}}
\newcommand{\abs}[1]{\ensuremath{\left|#1\right|}}
\newcommand{\Lquote}[1]{``#1"}
\newcommand{\BmR}{\mathbb{R}}
\newcommand{\set}[1]{\left\{#1\right\}}
\newcommand{\Mdede}[4]{
\begin{pmatrix}
#1&#2  \\
#3&#4 \\
\end{pmatrix}
}
\newcommand{\BmZ}{\mathbb{Z}}
\newcommand{\CmF}{\mathcal{F}}
\newcommand{\C}{\mathbb{C}}
\begin{document}

\title[]{On the sup-norm of Maass cusp forms of large level. III}
\author[]{Gergely Harcos}
\address{Alfr\'ed R\'enyi Institute of Mathematics, Hungarian Academy of Sciences, POB 127, Budapest H-1364, Hungary}
\email{gharcos@renyi.hu}

\author[]{Nicolas Templier}   
\address{Department of Mathematics, Fine Hall, Washington Road, Princeton, NJ 08544-1000.}
\email{templier@math.princeton.edu}
\thanks{This work was partially supported by a grant \#209849 from the Simons Foundation (NT) and by EC grant ERG 239277 and OTKA grants K 72731, PD 75126 (GH)}
\date{Sep 2011}
\keywords{automorphic forms, trace formula, amplification, diophantine approximation}
\subjclass[2010]{11F12,11D45,14G35}
\begin{abstract}
  Let $f$ be a Hecke--Maass cuspidal newform of square-free level $N$ and Laplacian eigenvalue $\lambda$. It is shown that $\pnorm{f}_\infty \ll_{\lambda,\epsilon} N^{-\frac{1}{6}+\epsilon} \pnorm{f}_2$ for any $\epsilon>0$.
\end{abstract}

\maketitle


\section{Introduction}\label{sec:intro}

This note deals with the problem of bounding the sup-norm of eigenfunctions on arithmetic hyperbolic surfaces. It is natural to restrict this problem to Hecke--Maass cuspidal newforms which are square-integrable joint eigenfunctions of the Laplacian and Hecke operators. We consider the noncompact modular surface $\Gamma_0(N)\SB \FmH$ equipped with its hyperbolic metric and associated measure; the total volume is then asymptotically equal to $N^{1+o(1)}$. We shall $L^2$-normalize all Hecke--Maass cuspidal newforms $f$ with respect to that measure, namely
\begin{equation}
\int_{\Gamma_0(N)\SB \FmH} \abs{f(z)}^2 \frac{dxdy}{y^2}=1.
\end{equation}
It is interesting to bound the sup-norm $\pnorm{f}_\infty$ in terms of the two basic parameters: the Laplacian eigenvalue $\lambda$ and the level $N$.

In the $\lambda$-aspect, the first nontrivial bound is due to Iwaniec and Sarnak~\cite{IS95} who established
$\pnorm{f}_\infty \ll_{N,\epsilon} \lambda^{\frac{5}{24}+\epsilon}$ for any $\epsilon>0$. Their key idea was to make use of the Hecke operators,
through the method of amplification, in order to go beyond $\pnorm{f}_\infty\ll_N \lambda^{\frac{1}{4}}$ which is valid on any Riemannian surface by~\cite{SS89}.

In the $N$-aspect, the \Lquote{trivial} bound is $\pnorm{f}_\infty \ll_{\lambda,\epsilon} N^{\epsilon}$ see~\cites{AU95,MU98,BH09}. Here and later the dependence on $\lambda$ is continuous. The first nontrivial bound in the $N$-aspect is due to Blomer--Holowinsky~\cite{BH09}*{p.~673} who proved $\pnorm{f}_\infty \ll_{\lambda,\epsilon} N^{-\frac{25}{914}+\epsilon}$, at least for square-free $N$. In~\cite{Temp:sup} the second named author revisited the proof by making a systematic use of geometric arguments, and derived a stronger exponent: $\pnorm{f}_\infty \ll_{\lambda,\epsilon} N^{-\frac{1}{22}+\epsilon}$. Helfgott--Ricotta (unpublished) improved some of the estimates in~\cite{Temp:sup} and obtained $\pnorm{f}_\infty \ll_{\lambda,\epsilon} N^{-\frac{1}{20}+\epsilon}$. In~\cite{Temp:harcos} we introduced a more efficient treatment of the counting problem at the heart of the argument and derived the estimate $\pnorm{f}_\infty \ll_{\lambda,\epsilon} N^{-\frac{1}{12}+\epsilon}$. We shall improve this estimate further.
\begin{theorem}\label{th:main} Let $f$ be an $L^2$-normalized Hecke--Maass cuspidal newform of square-free level $N$. Then for any $\epsilon>0$ we have a bound
  \[\pnorm{f}_\infty \ll_{\lambda,\epsilon} N^{-\frac{1}{6}+\epsilon},\]
where the implied constant depends continuously on $\lambda$.
\end{theorem}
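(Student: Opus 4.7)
The plan is to apply the amplification method of Iwaniec--Sarnak in the level-aspect framework developed in~\cites{Temp:sup,Temp:harcos}. I would fix a point $z_0\in\Gamma_0(N)\SB\FmH$ where $\abs{f}$ attains its supremum and feed it into the pre-trace inequality with a Hecke amplifier $x_\ell=\operatorname{sgn}(\lambda_f(\ell))$ supported on primes $\ell$ in a dyadic range $[L,2L]$ coprime to $N$. After Cauchy--Schwarz, this reduces the bound on $\abs{f(z_0)}^2$ to estimating an amplified kernel sum of the form
\[
\sum_{\ell_1,\ell_2}\abs{x_{\ell_1}x_{\ell_2}}\sum_{\substack{\gamma\in M_2(\BmZ)\\ \det\gamma=\ell_1\ell_2,\ N\mid c(\gamma)}}k_\lambda(\gamma z_0,z_0),
\]
where $k_\lambda$ is a point-pair invariant concentrated within hyperbolic distance $\delta\asymp\lambda^{-1/2}$ of $z_0$.

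The heart of the matter is then to bound, for each integer $n\leq L^2$, the counting function
\[
M(z_0,n,\delta)=\#\bigl\{\gamma\in M_2(\BmZ):\det\gamma=n,\ N\mid c(\gamma),\ d_{\mathrm{hyp}}(\gamma z_0,z_0)\leq\delta\bigr\}.
\]
Because $N$ is square-free, one has a full family of Atkin--Lehner involutions at one's disposal; combining them with the $\Gamma_0(N)$-action I would first move $z_0$ into a standard fundamental domain for $\mathrm{SL}_2(\BmZ)$. The argument then splits according to whether $z_0$ lies in the cuspidal region ($\operatorname{Im}z_0$ large) or the bulk region ($\operatorname{Im}z_0$ bounded). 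In the cuspidal case I would bound $\abs{f(z_0)}$ directly from the Fourier expansion at $\infty$ together with Rankin--Selberg-type estimates for sums of Hecke eigenvalues, which yields a power saving uniform in $N$. In the bulk case the condition $d_{\mathrm{hyp}}(\gamma z_0,z_0)\leq\delta$ becomes the Diophantine inequality $\abs{cz_0^2+(d-a)z_0-b}\leq\delta\abs{cz_0+d}$, which together with $\det\gamma=n$ and $N\mid c$ cuts out a thin lattice set that can be counted by a geometry-of-numbers argument.

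The hard step---and the place where the improvement from $N^{-1/12}$ to $N^{-1/6}$ must come from---is to upgrade the counting bound of~\cite{Temp:harcos} to something of the rough shape $M(z_0,n,\delta)\ll_\epsilon n^\epsilon(1+\delta n/N^{1/2})$, with good uniformity in $n$ and $z_0$. I would look for this gain by sharpening the Diophantine approximation step: either by exploiting the square-free factorisation $N=\prod p_i$ to chain several Atkin--Lehner reductions so that every prime factor of $N$ is effectively used, or by a finer slicing of the parameters $a,b,c,d$ that replaces trivial divisor bounds by mean-value estimates over the relevant arithmetic progressions. Inserting such an estimate into the amplified pre-trace inequality and optimising the amplifier length $L=N^\theta$ should then produce $\abs{f(z_0)}^2\ll_{\lambda,\epsilon}N^{-1/3+\epsilon}$, from which the theorem follows by taking square roots and using the $L^2$-normalisation.
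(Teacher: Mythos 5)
Your high-level framework is the right one and matches the paper: amplified pre-trace inequality, Atkin--Lehner reduction of $z_0$ (so that $z_0\in\CmF(N)$, whence $Ny\gg 1$ and $|az_0+b|^2\ge N^{-1}$), a direct Fourier-expansion bound to dispose of the region $y\gg N^{-2/3}$, and geometry of numbers for the remaining lattice count. But the two inputs that actually produce the jump from $N^{-1/12}$ to $N^{-1/6}$ are not in your sketch, and the alternatives you float in their place would not deliver the exponent.

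First, the lattice-counting improvement does not come from chaining Atkin--Lehner involutions over the prime factors of $N$ or from mean-value estimates in progressions. It comes from Schmidt's uniform bound $\#(M\cap D)\ll 1+R/\lambda_1+R^2/(\lambda_1\lambda_2)$ for a rank-two lattice with successive minima $\lambda_1\le\lambda_2$, applied to the lattice $\langle 1,z_0\rangle$ (covolume $y$, first minimum $\ge N^{-1/2}$ by the reduction). The three-term shape is strictly better than the $1+R^2/\lambda_1^2$ bound and is essential for the intermediate term $\Lambda^{5/2}/N^{1/2}$ in the final estimate. Your conjectured count $M(z_0,n,\delta)\ll n^\epsilon(1+\delta n/N^{1/2})$ is not of the right form and you give no route to it.

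Second, and more importantly, the decisive saving is Diophantine, not geometric. The amplifier in the paper is the Friedlander--Iwaniec one built from $|\lambda_f(p)|^2+|\lambda_f(p^2)|^2\gg 1$, so the opened square produces determinants $l\in\{1,\,l_1,\,l_1l_2,\,l_1l_2^2,\,l_1^2l_2^2\}$. For the family $l=l_1l_2^2$ with $l_1>1$ square-free one writes $(a-d)^2+4bc=(a+d)^2-4l_1l_2^2$; once $(a-d,b,c)$ is fixed the left side is a fixed nonzero integer, and this is a generalized Pell equation in $(a+d,l_2)$ whose solution count is $\Lambda^{o(1)}$ because the fundamental unit is bounded away from $1$. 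This is what lets one sum over $l_2$ essentially for free (Lemma~\ref{lem:new}) and is the source of the extra power of $N$. Your proposed amplifier $x_\ell=\operatorname{sgn}(\lambda_f(\ell))$ on primes produces only $l\in\{1,\ell_1\ell_2,\ell^2\}$ after Cauchy--Schwarz, so the critical $l_1l_2^2$ range on which the Pell argument operates never appears; you would at best recover the $N^{-1/12}$ exponent of the earlier paper. Without Schmidt's lemma and the Pell-equation counting, the optimization of $\Lambda$ does not close at $\Lambda=N^{1/3}$, and the $N^{-1/6}$ bound is out of reach.
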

\begin{remarks*}\begin{enumerate}[(i)]   
	\item It seems that $-\frac{1}{6}$ is the natural exponent for the sup-norm problem in the level aspect. Examples of such exponents are the Weyl exponent $\frac{1}{6}$ (resp. Burgess exponent $\frac{3}{16}$) in the subconvexity problem for $\mathrm{GL}_1$ in the archimedean (resp. nonarchimedean) aspect, or their doubles in the $\mathrm{GL}_2$-setting.
	\item Independently, Blomer--Michel~\cite{BM:hybrid} obtain a bound of the same quality for Hecke eigenforms on unions of arithmetic ellipsoids. In this paper we are concerned in~\eqref{def:count} with solutions of an indefinite quadratic equation $\det(\gamma)=l$, whereas arithmetic ellipsoids involve definite quadratic forms.
  \item	From Atkin--Lehner theory we may assume that $\Mim z\gg N^{-1}$ when investigating the sup-norm. The critical range is actually when $\Mim z \le N^{-\frac{2}{3}+o(1)}$. Otherwise the details of the proof below show that $\abs{f(z)}$ is significantly less than $N^{-\frac{1}{6}}$.
 \end{enumerate}
\end{remarks*}
 
The present note is derived from~\cite{Temp:hybrid} which is motivated by the comparison of the method in~\cite{IS95} for the $\lambda$-aspect with our method in~\cites{Temp:sup,Temp:harcos} for the $N$-aspect. The advantage of the new argument in~\cite{Temp:hybrid} is that it can be adapted to the $\lambda$-aspect to reproduce the bound $\pnorm{f}_\infty \ll_{N,\epsilon} \lambda^{\frac{5}{24}+\epsilon}$, which is key for establishing hybrid bounds simultaneously in the $\lambda$ and $N$-aspects. 
	Compared to~\cites{Temp:sup,Temp:harcos} the reader will find below two improvements coming from a Pell equation and a uniform count of lattice points~\cite{Schm68}.

 \section{Counting lattice points}

 \subsection{Notation}  To make this section self-contained we recall the definitions from~\cites{Temp:sup,Temp:harcos}. Let $\mathrm{GL}_2(\BmR)^+$ act on the upper-half plane $\FmH=\set{x+iy,\ y>0}$ by fractional linear transformations. Denote by $u(,)$ the following function of the hyperbolic distance:
\begin{equation} 
  u(w,z)=\frac{\abs{w-z}^2}{4\Mim w\ \Mim z}.
\end{equation}

For $z\in \FmH$ and $l,N\ge 1$ let $M_*(z,l,N)$ be the number of matrices $\gamma=\Mdede{a}{b}{c}{d}$ in $M_2(\BmZ)$ such that
\begin{equation} \label{def:count}
  \det(\gamma)=l,\quad c\equiv 0(N),\quad u(\gamma z,z)\le N^\epsilon,\quad c\neq 0,\quad (a+d)^2\neq 4l.
\end{equation}

 We write $f \preccurlyeq g$ meaning that for all $\epsilon>0$ there is a constant $C(\epsilon)>0$ such that $f(N)\le C(\epsilon)N^\epsilon g(N)$ for all $N\ge 1$. To simplify notation we omit the dependence in $\lambda$.
For example Theorem~\ref{th:main} says $\pnorm{f}_\infty \preccurlyeq N^{-\frac{1}{6}}$.

Let $\CmF(N)$ be the set of $z\in \FmH$ such that $\Mim z \ge \Mim \delta z$ for all Atkin--Lehner operators $\delta$ of level $N$. In this section we shall only use the fact (\cite{Temp:harcos}*{Lemma~2.2}) that for all $z=x+iy\in \CmF(N)$, we have $ Ny\gg 1$ and that for all $(a,b)\in \BmZ^2$ distinct from $(0,0)$ we have
\begin{equation} \label{shortest}
  \abs{az+b}^2 \ge \frac{1}{N}.
\end{equation}

\subsection{Lattice points}
We have the following uniform estimate for the number of lattice points in a disc (\cite{Schm68}*{Lemma~2}):
\begin{lemma}\label{lem:minkowski} Let $M$ be a euclidean lattice of rank $2$ and $D$ be a disc of radius $R>0$ in $M \otimes_\BmZ \BmR$ (not necessarily centered at $0$). If $\lambda_1\le \lambda_2$ are the successive minima of $M$, then
\begin{equation}\label{mink}
 \# M\cap D \ll 1+ \frac{R}{\lambda_1}+ \frac{R^2}{\lambda_1 \lambda_2}
\end{equation}
\end{lemma}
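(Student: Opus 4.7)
The plan is to exploit the existence of a well-behaved $\BmZ$-basis of $M$ adapted to the successive minima, and then reduce the count in $D$ to two one-dimensional projections. Concretely, the classical rank~$2$ reduction theory of Gauss furnishes a basis $v_1, v_2$ of $M$ with $|v_1| = \lambda_1$ and $|v_2| \asymp \lambda_2$, for which the angle $\theta$ between $v_1$ and $v_2$ can be taken in $[\pi/3, 2\pi/3]$; in particular $\sin\theta \gg 1$. Equivalently, $\operatorname{covol}(M) = |v_1|\,|v_2|\sin\theta \asymp \lambda_1\lambda_2$.

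Once this basis is fixed, I would write every lattice point as $m_1 v_1 + m_2 v_2$ with $(m_1, m_2) \in \BmZ^2$ and estimate the number of admissible pairs by two successive projections. Projecting $\BmR^2$ orthogonally to $v_1$ sends $D$ into an interval of length $2R$ and sends $m_1 v_1 + m_2 v_2$ to $m_2$ times a vector of length $|v_2|\sin\theta \gg \lambda_2$; hence $m_2$ lies in a set of cardinality $\ll 1 + R/\lambda_2$. For each such $m_2$, projecting onto the $v_1$-axis shows that the admissible values of $m_1$ form an arithmetic progression of step $|v_1| = \lambda_1$ intersected with an interval of length $2R$, giving a count $\ll 1 + R/\lambda_1$.

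Multiplying these two one-dimensional estimates yields
\[
\# M \cap D \ll \left(1 + \frac{R}{\lambda_1}\right)\!\left(1 + \frac{R}{\lambda_2}\right) = 1 + \frac{R}{\lambda_1} + \frac{R}{\lambda_2} + \frac{R^2}{\lambda_1\lambda_2}.
\]
Since $\lambda_1 \le \lambda_2$, the term $R/\lambda_2$ is dominated by $R/\lambda_1$ and can be absorbed, which gives the desired bound \eqref{mink}. I do not anticipate any serious obstacle: the only substantive input is the rank-$2$ reduction-of-basis theorem providing the uniform lower bound $\sin\theta \gg 1$, without which the orthogonal projection step would fail to control $m_2$. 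The argument is sharp in dimension $2$, precisely because reduction theory loses nothing in this range.
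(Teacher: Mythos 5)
Your proof is correct. The paper offers no proof of this lemma — it simply cites Schmidt's Lemma~2 — but your argument is the standard, self-contained rank-two proof: pass to a Gauss-reduced basis $v_1,v_2$ with $|v_1|=\lambda_1$, $|v_2|=\lambda_2$ and $|\cos\theta|\le\frac12$, then slice: the component orthogonal to $v_1$ restricts $m_2$ to $\ll 1+R/\lambda_2$ values, and for each fixed $m_2$ the chord of $D$ along the line $\{m_1v_1+m_2v_2\}$ (spacing $\lambda_1$) restricts $m_1$ to $\ll 1+R/\lambda_1$ values, giving $(1+R/\lambda_1)(1+R/\lambda_2)\ll 1+R/\lambda_1+R^2/(\lambda_1\lambda_2)$ since $\lambda_1\le\lambda_2$.
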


\begin{remarks*}\begin{enumerate}[(i)] 
  \item Let $d(M)>0$  be the covolume of $M$. Minkowski's second Theorem asserts that $\lambda_1\lambda_2 \asymp d(M)$. When $R\to \infty$, the leading term of~\eqref{mink} is $\frac{R^2}{d(M)}$ as expected.
  \item It is easier to establish the upper-bound $\ll 1+\frac{R^2}{\lambda_1^2}$ (which also has the advantage of having only two terms). One can verify that
	\begin{equation}\label{rem:compare} 
  1+ \frac{R}{\lambda_1}+ \frac{R^2}{\lambda_1 \lambda_2}
  \ll 1+ \frac{R^2}{\lambda_1^2}.
\end{equation}
Thus the estimate in~\eqref{mink} is always better.
\item 
  We have equality (up to a constant) in~\eqref{rem:compare} if and only if $R\ll \lambda_1$. In the applications below it is often the case that $R\ll \lambda_1$. However this is not always the case, and then the improvement of~\eqref{mink} on the easier bound is significant.
\end{enumerate}
\end{remarks*}

\subsection{Counting}
The following is an improvement on~\cite{Temp:harcos}*{Lemma~4.2}:
\begin{lemma}\label{lem:ht}
  Let $z=x+iy \in \CmF(N)$ and $1\le L \le N^{O(1)}$. Then
  \begin{equation} \label{eq:lem:ht}
	\sum_{1\le l\le L} M_*(z,l,N) \preccurlyeq 
	\frac{L}{Ny} + 
	\frac{L^{\frac{3}{2}}}{N^{\frac{1}{2}}}
	+\frac{L^{2}}{N}.
\end{equation}
If we restrict to $l$ being a perfect square, then one can improve by a factor $L^{\frac{1}{2}}$:
\begin{equation}\label{lsquare} 
	\sum_{\substack{1\le l\le L,\\ \text{$l$\ is a square}}}
	M_*(z,l,N) \preccurlyeq
	\frac{L^{\frac{1}{2}}}{Ny} + 
	\frac{L}{N^{\frac{1}{2}}}
	+\frac{L^{\frac{3}{2}}}{N}.
\end{equation}
\end{lemma}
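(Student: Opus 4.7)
The plan is to transform $u(\gamma z,z)\le N^\epsilon$ into polynomial inequalities on the matrix entries, reduce the count to a two-dimensional lattice point problem amenable to Lemma~\ref{lem:minkowski}, and use $\det\gamma=l$ together with a Pell-type trace identity to handle the remaining degrees of freedom and to gain the extra factor $L^{1/2}$ in the square case.

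First I would compute $\Mim\gamma z=ly/\abs{cz+d}^2$ and $\gamma z-z=-(cz^2-(a-d)z-b)/(cz+d)$, so that
\[u(\gamma z,z)=\frac{\abs{cz^2-(a-d)z-b}^2}{4ly^2}.\]
Separating real and imaginary parts, and using $\abs{cz+d}^2\asymp l$ (coming from $\Mim\gamma z\asymp\Mim z$), one extracts the basic inequalities $\abs c\preccurlyeq\sqrt l/y$, $\abs{cx+d}\preccurlyeq\sqrt l$, $\abs{2cx-(a-d)}\preccurlyeq\sqrt l$, and $\abs{c(x^2-y^2)-(a-d)x-b}\preccurlyeq\sqrt l\,y$. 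Substituting $b=(ad-l)/c$ in the last one and completing the square yields the Pell-type trace identity
\[\abs{(a+d)^2-\Omega(c,a-d)}\preccurlyeq\abs c\sqrt l\,y,\qquad\Omega(c,s):=(2cx-s)^2-4c^2y^2+4l.\]

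The core of the argument is a lattice point count. I would single out an appropriate two-dimensional sublattice of $\BmZ^2$ derived from two of the variables among $c,a-d,a+d,d$ (with $N\mid c$), equipped with a Hermitian norm dictated by the above inequalities, and apply Lemma~\ref{lem:minkowski} in a disc of radius $R\asymp\sqrt l$. Using $\lambda_1\gg N^{-1/2}$ from~\eqref{shortest} and the covolume computation, the three terms $1$, $R/\lambda_1$, $R^2/(\lambda_1\lambda_2)$ of Lemma~\ref{lem:minkowski} reproduce, after summation over $l\in[1,L]$, the three terms $L/(Ny)$, $L^{3/2}/N^{1/2}$, $L^2/N$ of~\eqref{eq:lem:ht}. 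The Pell trace identity, together with the hypothesis $(a+d)^2\neq 4l$ from~\eqref{def:count} (which excludes the degenerate case $\Omega\equiv 0$), pins the remaining two variables down to $\preccurlyeq 1$ choice per lattice point, and $b$ is then determined by $b=(ad-l)/c$.

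For the perfect-square refinement~\eqref{lsquare}, writing $l=l_0^2$ factorises the trace equation as $(a+d-2l_0)(a+d+2l_0)=(a-d)^2+4bc$. Once the lattice count has fixed $(c,d,a-d,b)$ up to $\preccurlyeq 1$ choice, this divisor-type identity pins down at most $\preccurlyeq 1$ pair $(a+d,l_0)$, so that the outer sum effectively runs over $l_0\in[1,L^{1/2}]$ instead of $l\in[1,L]$, producing the claimed saving of $L^{1/2}$. The main technical obstacle I expect is the precise identification of the two-dimensional sublattice so that Lemma~\ref{lem:minkowski} produces exactly the three terms of~\eqref{eq:lem:ht} (without extra powers of $y$), together with a separate treatment of the boundary regime where $\Omega(c,a-d)$ is of the same order as $\abs c\sqrt l\,y$ and the Pell identity loses its grip on $a+d$; the latter would have to be absorbed by the first term $L/(Ny)$ of~\eqref{eq:lem:ht}, using $Ny\gg 1$ for $z\in\CmF(N)$.
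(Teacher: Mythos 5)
Your opening computation is correct and matches the paper: you recover $u(\gamma z,z)\asymp\abs{cz^2-(a-d)z-b}^2/(4ly^2)$ and the auxiliary bounds $\abs{c}\preccurlyeq\sqrt{l}/y$, $\abs{2cx-(a-d)}\preccurlyeq\sqrt{l}$, $\abs{c(x^2-y^2)-(a-d)x-b}\preccurlyeq\sqrt{l}\,y$, and for the square case the divisor argument via $(a+d-2l_0)(a+d+2l_0)=(a-d)^2+4bc$ is exactly the paper's idea for~\eqref{lsquare}. But the core counting step as you set it up does not close, and you partially admit this.

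First, you propose to run Lemma~\ref{lem:minkowski} on a rank-two sublattice built from two variables among $c,a-d,a+d,d$, with $b$ recovered afterwards from $\det\gamma=l$. The paper instead \emph{fixes $c$ first} (there are $\preccurlyeq L^{1/2}/(Ny)$ admissible nonzero multiples of $N$, since $\abs{c}\preccurlyeq L^{1/2}/y$), and then applies Lemma~\ref{lem:minkowski} to the pair $(a-d,b)$ viewed as coordinates in the lattice $\langle 1,z\rangle\subset\C$, which has covolume $y$ and first minimum $\ge N^{-1/2}$ by~\eqref{shortest}: the disc has radius $\asymp L^{1/2}y$ and the count is $\preccurlyeq 1+L^{1/2}N^{1/2}y+Ly$. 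Your proposed lattice does not include $b$, so it is genuinely different from this; the fact that you list ``identifying the lattice so that the three terms come out without spurious powers of $y$'' as an open obstacle is precisely the gap. The input $\lambda_1\gg N^{-1/2}$ from~\eqref{shortest} refers to the lattice $\langle 1,z\rangle$, so it is not available for a lattice built from $\{c,a-d,a+d,d\}$.

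Second, your bookkeeping for~\eqref{eq:lem:ht} cannot produce the first term $L/(Ny)$. You claim the three terms $1+R/\lambda_1+R^2/(\lambda_1\lambda_2)$ of Lemma~\ref{lem:minkowski} with $R\asymp\sqrt{l}$ ``reproduce, after summation over $l\in[1,L]$,'' the three terms of~\eqref{eq:lem:ht}. But summing the constant term over $l\le L$ gives $L$, not $L/(Ny)$, no matter which lattice you choose; the factor $1/(Ny)$ in the paper comes specifically from the count of $c$'s, which is handled \emph{before} the Minkowski step and multiplied in. Relatedly, your claim that the Pell/trace identity ``pins the remaining two variables down to $\preccurlyeq 1$ choice'' only holds when $l$ is a perfect square, via the divisor factorization; in the general case of~\eqref{eq:lem:ht} the paper uses the trivial bound $\abs{a+d}\preccurlyeq L^{1/2}$ from~\eqref{pf:disc}, which supplies the missing $L^{1/2}$ factor. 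So the proposal conflates the two parts of the lemma and, for the first part, would lose the $1/(Ny)$ savings entirely. The fix is to follow the paper's decomposition: enumerate $c$ ($\preccurlyeq L^{1/2}/(Ny)$ choices), apply Lemma~\ref{lem:minkowski} to $(a-d,b)$ in $\langle 1,z\rangle$, and then either bound $a+d$ trivially by $\preccurlyeq L^{1/2}$ (for~\eqref{eq:lem:ht}) or invoke the divisor bound on $(a+d-2l_0)(a+d+2l_0)$ (for~\eqref{lsquare}).
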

\begin{proof}
We briefly recall the beginning of the argument in~\cite{Temp:harcos}*{Lemma~4.2}.
Let $\gamma=\Mdede{a}{b}{c}{d}$ satisfy~\eqref{def:count}. In coordinates we have
\begin{equation} \label{pf:disc}
  \abs{-cz^2+(a-d)z+b}^2 \le L y^2N^\epsilon.
\end{equation}

As in~\cites{IS95,Temp:harcos} we verify that $\abs{c}\preccurlyeq L^{\frac{1}{2}}/y$, so there are $\preccurlyeq L^{\frac{1}{2}}/(Ny)$ possible values of $c$.

Consider the lattice $\langle 1,z \rangle$ inside $\C$. Its covolume equals $y$ and its shortest length is at least $N^{-1/2}$ by~\eqref{shortest}. In the inequality~\eqref{pf:disc} we are counting lattice points $(a-d,b)$ in a disc of volume $\preccurlyeq Ly^2$ centered at $cz^2$. Hence by Lemma~\eqref{lem:minkowski}, there are $\preccurlyeq 1+ \frac{L^{\frac12}y}{N^{-\frac12}} + \frac{Ly^2}{y}$ possible pairs $(a-d,b)$ for each value of $c$.

As in~\cites{IS95,Temp:harcos} one can deduce from~\eqref{pf:disc} that $\abs{a+d} \preccurlyeq L^{\frac{1}{2}}$. This concludes the proof of~\eqref{eq:lem:ht}.

For~\eqref{lsquare} we instead use the identity
\begin{equation}\label{detidentity}
  (a-d)^2+4bc=(a+d)^2-4l.
\end{equation}
The left-hand side is non-zero by assumption~\eqref{def:count}. Since $l$ is a perfect square, for each given triple $(a-d,b,c)$ the number of pairs $(a+d,l)$ satisfying~\eqref{detidentity} is $\preccurlyeq 1$. This concludes the proof.
\end{proof}

The following is a refinement of~\eqref{lsquare}.
\begin{lemma}\label{lem:new}
  Let $z=x+iy \in \CmF(N)$ and $1 \le l_1 \le \Lambda \le N^{O(1)}$. Then
  \begin{equation} 
	\sum_{1\le l_2\le \Lambda} M_*(z,l_1l_2^2,N)
	\preccurlyeq
	\frac{\Lambda^{\frac{3}{2}}}{Ny} + \frac{\Lambda^3}{N^{\frac{1}{2}}} + \frac{ \Lambda^{\frac{9}{2}}}{N}.
  \end{equation}
\end{lemma}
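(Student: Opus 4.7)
The plan is to mimic the proof of Lemma~\ref{lem:ht}, interpreting the left-hand side as a count of quintuples $(a,b,c,d,l_2)$ with $\det \gamma = l_1 l_2^2$ satisfying~\eqref{def:count} and $1\le l_2\le \Lambda$. Writing $L := l_1 \Lambda^2$, we have $\det \gamma \le L$ throughout, so the basic size bounds derived from~\eqref{pf:disc} (namely $|c|\preccurlyeq L^{1/2}/y$ and $|a\pm d|\preccurlyeq L^{1/2}$) all apply uniformly. The divisibility $N\mid c$ leaves $\preccurlyeq l_1^{1/2}\Lambda/(Ny)$ admissible values of $c$, and for each such $c$ the pair $(a-d,b)$ is a lattice point of $\langle 1,z\rangle\subset\BmC$ inside a disc of radius $\preccurlyeq L^{1/2} y$ centred at $cz^2$. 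Applying Lemma~\ref{lem:minkowski} with $\lambda_1\ge N^{-1/2}$ from~\eqref{shortest} and $\lambda_1\lambda_2\asymp y$ gives $\preccurlyeq 1 + L^{1/2} y N^{1/2} + L y$ admissible pairs $(a-d,b)$, exactly as in the proof of~\eqref{eq:lem:ht}.

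The new input replaces the square-number divisor step of~\eqref{lsquare} by a Pell-type count. Given $(c,a-d,b)$, identity~\eqref{detidentity} reads
\[ (a+d)^2 - 4 l_1 l_2^2 \;=\; (a-d)^2 + 4bc \;=:\; D, \]
a nonzero integer with $|D|\preccurlyeq L$. Setting $u:=a+d$ and $v:=l_2$ this becomes the Pell-type equation
\[ u^2 - 4 l_1 v^2 = D, \]
and the key claim is that the number of integer solutions with $|u|\preccurlyeq L^{1/2}$ and $1\le v\le\Lambda$ is $\preccurlyeq 1$. When $l_1$ is a perfect square the left-hand side factors over $\BmZ$ and a divisor bound on $|D|$ suffices (as in the proof of~\eqref{lsquare}); when $l_1$ is not a perfect square the solutions fall into at most $\tau(|D|)\preccurlyeq 1$ orbits under the units of the order $\BmZ[2\sqrt{l_1}]$, and each orbit contains only $O(\log L)\preccurlyeq 1$ elements of polynomial size. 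This is the \emph{Pell equation} improvement announced in Remark~(i) following Theorem~\ref{th:main}.

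Multiplying the three counts gives
\[ \sum_{1\le l_2\le\Lambda} M_*(z,l_1 l_2^2,N) \;\preccurlyeq\; \frac{L^{1/2}}{Ny}\bigl(1 + L^{1/2} y N^{1/2} + L y\bigr) \;=\; \frac{L^{1/2}}{Ny} + \frac{L}{N^{1/2}} + \frac{L^{3/2}}{N}, \]
and substituting $L = l_1\Lambda^2 \le \Lambda^3$ produces the claimed bound. The main obstacle is the uniform $\preccurlyeq 1$ Pell count in the non-square case: it requires both the divisor bound on $|D|$ (to control the number of orbits) and a uniform lower bound, polynomial in $\log N$, on the logarithm of the fundamental unit of $\BmZ[2\sqrt{l_1}]$ (to control how many orbit representatives lie in the relevant box). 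Both ingredients are classical, but need to be combined with the bound $|D|\preccurlyeq L$ to produce a uniform estimate across all residue classes of $l_1$.
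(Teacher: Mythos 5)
Your proof follows essentially the same route as the paper: bound $|c|$, count $(a-d,b)$ as lattice points in a disc via Lemma~\ref{lem:minkowski}, and then control the remaining pair $(a+d,l_2)$ through the generalized Pell equation $(a+d)^2-4l_1 l_2^2=D$ with $D\neq 0$. One small simplification the paper makes where you hedge: you flag the need for a lower bound on the regulator of the order $\BmZ[2\sqrt{l_1}]$ that is uniform (``polynomial in $\log N$''), but in fact the fundamental unit of \emph{any} real quadratic order is at least $\tfrac{1+\sqrt 5}{2}$, an absolute constant, so the number of orbit representatives of bounded size is $\ll \log L \preccurlyeq 1$ with no further input needed.
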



\begin{proof} Let $\gamma=\Mdede{a}{b}{c}{d}$ satisfy~\eqref{def:count}. We have
\begin{equation} \label{pf:disc1}
  \abs{-cz^2+(a-d)z+b}^2 \le l_1l_2^2 y^2N^\epsilon.
\end{equation}
This implies $\abs{c}\preccurlyeq \Lambda^{\frac{3}{2}}/y$, so there are $\preccurlyeq \Lambda^{\frac{3}{2}}/(Ny)$ possible values of $c$.

For each value of $c$, we again apply~Lemma~\ref{lem:minkowski} to the lattice $\langle 1,z \rangle$ of covolume $y$ and shortest length at least $N^{-\frac{1}{2}}$. In the inequality~\eqref{pf:disc1} we are counting lattice points $(a-d,b)$ in a disc of volume $\preccurlyeq \Lambda^{3} y^2$. This implies that there are $\preccurlyeq 1+\frac{\Lambda^{\frac{3}{2}}y}{N^{-\frac{1}{2}}} + \frac{\Lambda^3 y^2}{y}$ possible pairs $(a-d,b)$ satisfying~\eqref{pf:disc1}.

Further, since $\det(\gamma)=l_1l_2^2$, we have:
\begin{equation}\label{pell} 
  (a-d)^2+4bc = (a+d)^2 - 4l_1l_2^2.
\end{equation}
The left-hand side is already determined by the values of $c$ and $(a-d,b)$. It is nonzero by assumption~\eqref{def:count}. This is a generalized Pell equation in the remaining variables $a+d$ and $l_2$. 

Without loss of generality we can assume that $l_1$ is square-free. One can deduce from~\eqref{pf:disc} that $\abs{a+d} \preccurlyeq \Lambda^{\frac{3}{2}}$. If $l_1=1$ then we are done with a divisor bound as in the proof of~\eqref{lsquare}. 

If $l_1>1$ then we write the solutions of the equation in terms of the fundamental unit. The fundamental unit is always greater than $\frac{1+\sqrt{5}}{2}=1.618\cdots$, which is bounded away from $1$ (for better estimates, see~\cite{FJ11} and the references herein). We deduce that the number of pairs $(a+d,l_2)$ of solutions of~\eqref{pell} is $\ll \Lambda^{o(1)} \preccurlyeq 1$. 

The total number of $\gamma$'s is 
\begin{equation} 
  \preccurlyeq \frac{\Lambda^{\frac{3}{2}}}{Ny} \cdot 
  (1+\Lambda^{\frac{3}{2}}N^{\frac{1}{2}} y+\Lambda^3 y).
\end{equation}
This concludes the proof of the lemma.
\end{proof}

\subsection{Special matrices}
We let $M_u(z,l,N)$ be the number of matrices satisfying~\eqref{def:count} but with the condition $c=0$ instead of $c\neq 0$ (upper-triangular). 
\begin{lemma}\label{lem:Mu}
  Let $z=x+iy\in \CmF(N)$ and $1\le \Lambda \le N^{O(1)}$. Then the following estimates hold, where $l_1, l_2$ run through prime numbers:
 \begin{equation} \label{Mu:2}
	\sum_{1\le l_1,l_2 \le \Lambda}
		M_u(z,l_1l_2,N)
	\preccurlyeq
	\Lambda+  \Lambda^2 N^{\frac{1}{2}}y + \Lambda^{3} y,   
 \end{equation}

  \begin{equation} \label{Mu:3}
	\sum_{1\le l_1,l_2 \le \Lambda}
	M_u(z,l_1l_2^2,N)
	\preccurlyeq
	\Lambda+
	\Lambda^{\frac{5}{2}} N^{\frac{1}{2}} y + \Lambda^4 y,   
 \end{equation}

  \begin{equation} \label{Mu:4}
	\sum_{1\le l_1,l_2 \le \Lambda}
	M_u(z,l_1^2l_2^2,N)
	\preccurlyeq
	1+
	\Lambda^2 N^{\frac{1}{2}} y + \Lambda^4 y.
  \end{equation}
\end{lemma}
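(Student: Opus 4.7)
The plan is to adapt the lattice-counting argument of Lemma~\ref{lem:ht} to the upper-triangular case. For $\gamma=\Mdede{a}{b}{0}{d}$ counted by $M_u(z,l,N)$, the inequality $u(\gamma z,z)\le N^\epsilon$ reduces to $|mz+b|^2\le 4ly^2N^\epsilon$ with $m:=a-d$, and since $c=0$ the condition $(a+d)^2\ne 4l$ becomes $m\ne 0$. I view $(m,b)$ as a lattice point in $\langle 1,z\rangle\subset\C$, which has covolume $y$ and shortest vector $\ge N^{-1/2}$ by~\eqref{shortest}. In each of~\eqref{Mu:2},~\eqref{Mu:3},~\eqref{Mu:4} the quantity $l$ is bounded by $l_{\max}=\Lambda^2,\Lambda^3,\Lambda^4$ respectively, so the disc has radius $R\preccurlyeq\sqrt{l_{\max}}\,y$; Lemma~\ref{lem:minkowski} then bounds the number of admissible $(m,b)$ by $\preccurlyeq 1+\sqrt{l_{\max}}\,N^{1/2}y+l_{\max}y$, which is precisely the parenthetical factor of each claimed estimate.

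It remains to multiply by a bound on the number of tuples $(a,d,l_1,l_2)$ compatible with a prescribed $(m,b)$. Setting $s:=a+d$, the determinant identity $s^2-m^2=4l$ will be the key ingredient, used differently in each case. For~\eqref{Mu:2} and~\eqref{Mu:3} I would stratify by the prime $l_2\le\Lambda$: for each $l_2$, the ordered divisor pairs $(|a|,|d|)$ of $l_1l_2$ (respectively $l_1l_2^2$) are $O(1)$ in number, and within each pair the condition $a-d=m$ determines the remaining prime $l_1$ uniquely when a solution exists. Summing over $l_2\le\Lambda$ yields a completion count $\preccurlyeq\Lambda$, producing the prefactor $\Lambda$ in~\eqref{Mu:2} and~\eqref{Mu:3}. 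For~\eqref{Mu:4} the same stratification would only give $\Lambda$, which is too weak; instead I would exploit that $l=(l_1l_2)^2$ is a perfect square to rewrite the identity as $(s-2l_1l_2)(s+2l_1l_2)=m^2$. Since $m\ne 0$, the divisor bound on the fixed integer $m^2$ produces $\preccurlyeq 1$ pairs $(s,l_1l_2)$, and then $\tau(l_1l_2)\preccurlyeq 1$ recovers the individual primes $(l_1,l_2)$; the completion count is thus $\preccurlyeq 1$, matching the absence of a $\Lambda$ prefactor in~\eqref{Mu:4}.

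The main obstacle is locating the right completion bound in each case: a naive argument based on the size of $s=a+d$ alone gives $\preccurlyeq\sqrt{l_{\max}}$ completions, which is $\Lambda^{3/2}$ in~\eqref{Mu:3} and too weak to match the claim. The $l_2$-stratification exploits the restricted divisor structure of $l_1l_2$ and $l_1l_2^2$ to improve this to $\preccurlyeq\Lambda$ for~\eqref{Mu:2} and~\eqref{Mu:3}, while for~\eqref{Mu:4} the perfect-square structure is essential in order to descend all the way from $\Lambda$ to $1$. Once these completion bounds are established, multiplying by the Minkowski lattice count yields each of the three estimates.
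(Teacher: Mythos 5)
Your argument is correct and follows the paper's proof closely: in each case you apply Lemma~\ref{lem:minkowski} to the lattice $\langle 1,z\rangle$ (covolume $y$, shortest length $\ge N^{-1/2}$) to bound the admissible pairs $(a-d,b)$, and then multiply by a completion count $\preccurlyeq\Lambda,\Lambda,1$ for the remaining data, exactly as the paper does. The only variation is in~\eqref{Mu:4}: the paper argues case-by-case over the $O(1)$ divisor configurations $(a,d)\in\{(1,l_1^2l_2^2),(l_1,l_1l_2^2),(l_1^2,l_2^2),\ldots\}$ and applies the divisor bound on $a-d$ in each, whereas you collapse these cases into the single factorization $(s-2l_1l_2)(s+2l_1l_2)=m^2$ — the $c=0$, perfect-square specialization of the identity~\eqref{pell} used in Lemma~\ref{lem:new} — and apply the divisor bound once to $m^2\ne 0$; both routes give the same $\preccurlyeq 1$, so the proofs are equivalent.
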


\begin{proof}
  We need to count the number of matrices $\gamma=\Mdede{a}{b}{0}{d}\in M_2(\BmZ)$ such that
  \begin{equation} \label{latpoint}
	\abs{(a-d)z+b}^2 \le ad y^2 N^\epsilon
  \end{equation}
and $ad=l_1l_2$ (resp. $ad=l_1l_2^2$, and $ad=l_1^2l_2^2$).

We again consider the lattice $\langle 1,z \rangle$ of covolume $y$ and shortest length at least $N^{-\frac{1}{2}}$. In the inequality~\eqref{latpoint} we are counting lattice points $(a-d,b)$ in a disc of volume $\preccurlyeq ady^2$. 

We consider~\eqref{Mu:2} first. There are $\preccurlyeq 1+ \Lambda N^{\frac12} y + \Lambda^2y$ possible pairs of integers $(a-d,b)$ satisfying~\eqref{latpoint}. Each pair gives rise to $O(\Lambda)$ matrices $\gamma$ (this is because $ad=l_1l_2$).

Next we consider~\eqref{Mu:3}. There are $\preccurlyeq 1+\Lambda^{\frac{3}{2}}N^{\frac12}y+\Lambda^3y$ pairs of integers $(a-d,b)$ satisfying~\eqref{latpoint}.
Each pair gives rise to $O(\Lambda)$ matrices $\gamma$ (this is because $ad=l_1l^2_2$).

Finally we consider~\eqref{Mu:4}. There are $\preccurlyeq 1+\Lambda^{2}N^{\frac12}y+\Lambda^4y$ pairs of integers $(a-d,b)$ satisfying~\eqref{latpoint}.
Since $l_1$ and $l_2$ are primes, we have either $(a=1, d=l^2_1l_2^2)$ or $(a=l_1,d=l_1l_2^2)$ or $(a=l_1^2,d=l_2^2)$, or equivalent configurations. In each configuration, and for a given value of $a-d$, there are $\Lambda^{o(1)}$ pairs $(a,d)$. Thus each pair $(a-d,b)$ gives rise to $\Lambda^{o(1)}$ matrices $\gamma$.
\end{proof}
We note that a similar proof also yields:
\begin{equation}\label{Mu} 
  \sum_{\substack{
  1\le l \le L\\
  \text{ $l$ prime}
  }} M_u(z,l,N) \preccurlyeq 1 + L^{\frac12} N^{\frac{1}{2}}y + Ly.
\end{equation}

Finally let $M_p(z,l,N)$ be the number of matrices satisfying~\eqref{def:count} but instead with the condition $(a+d)^2=4l$ (parabolic) and with no restriction on $c\equiv 0(N)$.
Then~\cite{Temp:harcos}*{Lemma~4.1} gives
\begin{equation} \label{Mp}
  M_p(z,l,N) = 2\delta_\square(l),\quad 1\le l < y^{-2}N^{-\epsilon}.
\end{equation}
Here $\delta_\square(l)=1,0$ depending on whether $l$ is a perfect square or not.

We let $M(z,l,N):=M_*(z,l,N)+M_u(z,l,N)+M_p(z,l,N)$ which is the number of matrices satisfying the first three conditions in~\eqref{def:count}.

\section{Proof of Theorem~\ref{th:main}}

Applying the amplification method of Friedlander--Iwaniec as in~\cite{IS95} and~\cite{Temp:harcos}*{\S3}, we have
\begin{equation} \label{ampli}
  \Lambda^{2} \abs{f(z)}^2 \preccurlyeq \sum_{l\ge 1} \frac{y_l}{\sqrt{l}} M(z,l,N).
\end{equation}
Here $\Lambda^2>0$ is the amplifier length and the sequence $y_l\in \BmR_{\ge 0}$ satisfies: 
\begin{equation*} 
  y_l:= 
  \begin{cases}
	\Lambda,&l=1,\\
	1,&\text{$l=l_1$ or $l_1l_2$ or $l_1l_2^2$ or $l_1^2l_2^2$ with $\Lambda <l_1,l_2<2\Lambda$ primes,}\\
	0,&\text{otherwise}.
  \end{cases}
\end{equation*}


By~\cite{Temp:sup}*{\S3.2}, $\abs{f(x+iy)}\preccurlyeq (Ny)^{-\frac{1}{2}}$. Thus we may assume that $y<N^{-\frac{2}{3}}$ when establishing Theorem~\ref{th:main}. Without loss of generality we can also assume $z=x+iy\in \CmF(N)$.  

We shall choose $\Lambda=N^{\frac{1}{3}-\frac{\epsilon}{4}}$. This implies $\Lambda^4<y^{-2}N^{-\epsilon}$, thus the condition in~\eqref{Mp} is satisfied. Therefore the contribution in~\eqref{ampli} of the {\bf parabolic matrices} is $\ll \Lambda$ using~\eqref{Mp}.

The contribution in~\eqref{ampli} of the {\bf upper-triangular matrices} with $l=1$ is $\preccurlyeq \Lambda (1 + N^{\frac{1}{2}} y+y)$ using~\eqref{Mu}. For $\Lambda < l < 2\Lambda$ it is $\preccurlyeq  \Lambda^{-\frac12}+ N^{\frac{1}{2}} y + \Lambda^{\frac12} y$ using~\eqref{Mu} again. For $\Lambda^2<l<4\Lambda^2$ it is $\preccurlyeq 1+ \Lambda N^{\frac{1}{2}}y + \Lambda^2 y$ using~\eqref{Mu:2} of Lemma~\ref{lem:Mu}. For $\Lambda^3<l<8\Lambda^3$ it is $\preccurlyeq \Lambda N^{\frac{1}{2}}y+\Lambda^{\frac{5}{2}} y$ using~\eqref{Mu:3} of Lemma~\ref{lem:Mu}.  For $l>\Lambda^4$ it is $\preccurlyeq N^{\frac{1}{2}} y+\Lambda^2 y$ using~\eqref{Mu:4} of Lemma~\ref{lem:Mu}.

It now remains to consider the matrices in~\eqref{def:count} counted by $M_*$. The contribution in~\eqref{ampli} of $l=1$ is $\preccurlyeq \Lambda(\frac{1}{Ny}+N^{-\frac{1}{2}})$ using~\eqref{eq:lem:ht} in Lemma~\ref{lem:ht}. For $\Lambda < l < 2\Lambda$ it is $\preccurlyeq \frac{\Lambda^{\frac{1}{2}}}{Ny}+ 
\frac{\Lambda}{N^{\frac{1}{2}}}+
\frac{\Lambda^{\frac{3}{2}}}{N}$ using~\eqref{eq:lem:ht} again.
For $\Lambda^2<l<4\Lambda^2$ it is $\preccurlyeq \frac{\Lambda}{Ny}
+\frac{\Lambda^2}{N^{\frac{1}{2}}}+
\frac{\Lambda^3}{N}$ using~\eqref{eq:lem:ht} again.
For $\Lambda^3<l<8\Lambda^3$ it is $\preccurlyeq \frac{\Lambda}{Ny}+ \frac{\Lambda^{\frac{5}{2}}}{N^{\frac{1}{2}}} + \frac{\Lambda^4}{N}$ using~Lemma~\ref{lem:new}. For $l>\Lambda^4$ it is $\preccurlyeq \frac{1}{Ny}+
\frac{\Lambda^2}{N^{\frac{1}{2}}}
+\frac{\Lambda^4}{N}$ using~\eqref{lsquare}.

Altogether we obtain that
\begin{equation}
  \Lambda^2\abs{f(z)}^2 \preccurlyeq \Lambda + \frac{\Lambda^{\frac{5}{2}}}{N^{\frac{1}{2}}}  +  \frac{\Lambda^4}{N}. 
\end{equation}
Choosing $\Lambda:=N^{\frac{1}{3}-\frac{\epsilon}{4}}$, all three terms above are equal to $N^{\frac{1}{3}+o(1)}$. This concludes the proof of Theorem~\ref{th:main}. \qed

\begin{remark*} The conclusion of Theorem~\ref{th:main} holds true for Hecke--Maass cuspidal newforms of an arbitrary nebentypus and also for holomorphic modular forms. Indeed the amplification method again yields the inequality~\eqref{ampli} above and the rest of the proof goes through without change. Also the assumption that $f$ be a newform is not necessary since Atkin--Lehner theory reduces the general case to the case of newforms. For an oldform $f$, the bound would be in terms of the level from which $f$ was induced.
\end{remark*}

\subsection*{Acknowledgements} We thank the referee for his helpful comments.



\bibliographystyle{plain}

\def\cprime{$'$}\def\cprime{$'$}\def\cprime{$'$}\def\cprime{$'$}
\begin{bibdiv}
\begin{biblist}

\bib{AU95}{article}{
      author={Abbes, Ahmed},
      author={Ullmo, Emmanuel},
       title={Comparaison des m\'etriques d'{A}rakelov et de {P}oincar\'e sur
  {$X\sb 0(N)$}},
        date={1995},
        ISSN={0012-7094},
     journal={Duke Math. J.},
      volume={80},
      number={2},
       pages={295\ndash 307},
}

\bib{BM:hybrid}{article}{
      author={Blomer, V.},
      author={Michel, Ph.},
       title={Hybrid bounds for automorphic forms on ellipsoids over number
  fields},
      eprint={http://arxiv.org/abs/1110.4526},
}

\bib{BH09}{article}{
      author={Blomer, Valentin},
      author={Holowinsky, Roman},
       title={Bounding sup-norms of cusp forms of large level},
        date={2010},
        ISSN={0020-9910},
     journal={Invent. Math.},
      volume={179},
      number={3},
       pages={645\ndash 681},
         url={http://dx.doi.org/10.1007/s00222-009-0228-0},
}

\bib{FJ11}{article}{
      author={Fouvry, E.},
      author={Jouve, F.},
       title={A positive density of fundamental discriminants with large
  regulator},
      eprint={http://www.math.u-psud.fr/~fouvry/},
}

\bib{Temp:harcos}{article}{
      author={Harcos, G.},
      author={Templier, N.},
       title={On the sup-norm of {M}aass cusp forms of large level. {I}{I}},
        date={2011},
     journal={Int. Math. Res. Not.},
       pages={Art. ID rnr202, 11pp.},
}

\bib{IS95}{article}{
      author={Iwaniec, H.},
      author={Sarnak, P.},
       title={{$L\sp \infty$} norms of eigenfunctions of arithmetic surfaces},
        date={1995},
        ISSN={0003-486X},
     journal={Ann. of Math. (2)},
      volume={141},
      number={2},
       pages={301\ndash 320},
}

\bib{MU98}{article}{
      author={Michel, Ph.},
      author={Ullmo, E.},
       title={Points de petite hauteur sur les courbes modulaires {$X\sb
  0(N)$}},
        date={1998},
        ISSN={0020-9910},
     journal={Invent. Math.},
      volume={131},
      number={3},
       pages={645\ndash 674},
}

\bib{Schm68}{article}{
      author={Schmidt, Wolfgang~M.},
       title={Asymptotic formulae for point lattices of bounded determinant and
  subspaces of bounded height},
        date={1968},
        ISSN={0012-7094},
     journal={Duke Math. J.},
      volume={35},
       pages={327\ndash 339},
}

\bib{SS89}{article}{
      author={Seeger, A.},
      author={Sogge, C.~D.},
       title={Bounds for eigenfunctions of differential operators},
        date={1989},
        ISSN={0022-2518},
     journal={Indiana Univ. Math. J.},
      volume={38},
      number={3},
       pages={669\ndash 682},
}

\bib{Temp:hybrid}{article}{
      author={Templier, N.},
       title={Hybrid sup-norm bounds for {H}ecke-{M}aass cusp forms},
     journal={Submitted.},
}

\bib{Temp:sup}{article}{
      author={Templier, N.},
       title={On the sup-norm of {M}aass cusp forms of large level},
        date={2010},
     journal={Selecta Math. (N.S.)},
      volume={16},
      number={3},
       pages={501\ndash 531},
}

\end{biblist}
\end{bibdiv}

\end{document}